\nonstopmode \numberwithin{equation}{section}
\newtheorem{theorem}{Theorem}
\newtheorem{corollary}{Corollary}[section]
\begin{document}
\title[Unified Integrals associated with generalized Bessel-Maitland function ]
{ Some Unified Integrals associated with generalized Bessel-Maitland function  }
\author{M.S. Abouzaid$^{1}$, A.H. Abusufian$^{2}$, K.S. Nisar$^{2,*}$}
\address{ M.S. Abouzaid: Department of Mathematics, Faculty of Science,
Kafrelshiekh University, Egypt.}
\email{moheb\_abouzaid@hotmail.com}
\address{A.H. Abusufian: $^{1}$College of Arts and Science, Prince Sattam
bin Abdulaziz University, Wadi Al dawaser, Riyadh region 11991, Saudi Arabia}
\email{sufianmath97@hotmail.com}
\address{ K. S. Nisar: Department of Mathematics, College of Arts and
Science, Prince Sattam bin Abdulaziz University, Wadi Al dawaser, Riyadh
region 11991, Saudi Arabia}
\email{ksnisar1@gmail.com}
\subjclass[2010]{33C10, 33C20, 26A33, 33B15}
\thanks{$^{*}$ Corresponding author}
\keywords{Bessel-Maitland function; Generalized Wright hypergeometric
function; Gamma function; Integral formula}

\begin{abstract}
Generalized integral formulas involving the generalized Bessel-Maitland
function are considered and it expressed in terms of generalized Wright
hypergeometric functions. By assuming appropriate values of the parameters
in the main results, we obtained some interesting results of ordinary Bessel
function.
\end{abstract}

\maketitle

\section{Introduction and Preliminaries}

\label{sec-1}

\bigskip The Bessel-Maitland ${J_{v}}^{\alpha }\left( z\right) $ defined by
the following series representation \cite{Marichev} 
\begin{equation}
{J_{v}^{\delta }}\left( z\right) =\sum\limits_{n=0}^{\infty }{\frac{{{{%
\left( {-z}\right) }^{n}}}}{{n!\Gamma \left( {\delta n+v+1}\right) }}}
\label{1}
\end{equation}

The applications of Bessel-Maitland function are found in the field of
applied science, engineering, biological, chemical and physical science is
found in the book of Watson \cite{Watson}.The generalized Bessel-Maitland
function investigated and studied in \cite{Msingh} and defined it as: 
\begin{equation}
{J}_{v,q}^{\delta ,\gamma }\left( z\right) =\sum\limits_{n=0}^{\infty }{%
\frac{{{{\left( \gamma \right) }_{qn}}{{\left( {-z}\right) }^{n}}}}{{%
n!\Gamma \left( {\delta n+v+1}\right) }}}  \label{2}
\end{equation}

where $\mu ,v,\gamma \in C;{\Re }\left( \mu \right) \geq 0,{\Re }\left(
v\right) >-1,{\Re }\left( \gamma \right) \geq 0,q\in \left( {0,1}\right)
\cup \mathbb{N}$ and

\begin{equation}
{\left( \gamma \right) _{0}}=1,{\left( \gamma \right) _{q,n}}=\frac{{\Gamma
\left( {\gamma +qn}\right) }}{{\Gamma \left( \gamma \right) }}  \label{3}
\end{equation}

The generalized hypergeometric function represented as follows \cite%
{Rainville}:
\begin{equation}
_{p}F_{q}\left[ 
\begin{array}{c}
\left( \alpha _{p}\right) ; \\ 
\left( \beta _{q}\right) ;%
\end{array}%
z\right] =\sum\limits_{n=0}^{\infty }\frac{\Pi _{j=1}^{p}\left( \alpha
_{j}\right) _{n}}{\Pi _{j=1}^{q}\left( \beta _{j}\right) _{n}}\frac{z^{n}}{n!%
},  \label{6}
\end{equation}

provided $p\leq q; p=q+1$ and $\left\vert z\right\vert <1$

where $\left( \lambda \right) _{n}$ is well known Pochhammer symbol defined
for $\left( \text{ for }\lambda \in \mathbb{C}\right) $ (see \cite{Rainville}%
)
\begin{equation}
\left( \lambda \right) _{n}:=\left\{ 
\begin{array}{c}
1\text{ \ \ \ \ \ \ \ \ \ \ \ \ \ \ \ \ \ \ \ \ \ \ \ \ \ \ \ \ \ \ \ \ \ \
\ \ }\left( n=0\right) \\ 
\lambda \left( \lambda +1\right) ....\left( \lambda +n-1\right) \text{ \ \ \
\ \ \ \ \ \ \ \ \ \ }\left( n\in \mathbb{N}:=\{1,2,3....\}\right)%
\end{array}%
\right.  \label{7}
\end{equation}

\begin{equation}
\left( \lambda \right) _{n}=\frac{\Gamma \left( \lambda +n\right) }{\Gamma
\left( \lambda \right) }\text{ \ \ \ \ \ \ \ \ \ \ }\left( \lambda \in
C\backslash \mathbb{Z}_{0}^{-}\right) .  \label{8}
\end{equation}

where $\mathbb{Z}_{0}^{-}$ is the set of non positive integers.
The generalized Wright hypergeometric function ${}_{p}\Psi _{q}(z)$ is given
by the series 
\begin{eqnarray}
{}_{p}\Psi _{q}(z) &=&{}_{p}\Psi _{q}\left[ 
\begin{array}{c}
(a_{i},\alpha _{i})_{1,p} \\ 
(b_{j},\beta _{j})_{1,q}%
\end{array}%
\bigg|z\right]  \notag \\
&=&\frac{\prod_{j=1}^{q}\Gamma (\beta _{j})}{\prod_{i=1}^{p}\Gamma (\alpha
_{i})}\displaystyle\sum_{k=0}^{\infty }\dfrac{\prod_{i=1}^{p}\Gamma
(a_{i}+\alpha _{i}k)}{\prod_{j=1}^{q}\Gamma (b_{j}+\beta _{j}k)}\dfrac{z^{k}%
}{k!},  \label{4}
\end{eqnarray}%
where $a_{i},b_{j}\in \mathbb{C}$, and $\alpha _{i},\beta _{j}\in \mathbb{R}$
($i=1,2,\ldots ,p;j=1,2,\ldots ,q$). Asymptotic behavior of this function
for large values of argument of $z\in {\mathbb{C}}$ were studied in \cite%
{Foxc} and under the condition 
\begin{equation}
\displaystyle\sum_{j=1}^{q}\beta _{j}-\displaystyle\sum_{i=1}^{p}\alpha
_{i}>-1  \label{5}
\end{equation}%
was found in the work of \cite{Wright-2,Wright-3}. Properties of this
generalized Wright function were investigated in \cite{Kilbas}, (see also 
\cite{Kilbas-itsf, Kilbas-frac}. In particular, it was proved \cite{Kilbas}
that ${}_{p}\Psi _{q}(z)$, $z\in {\mathbb{C}}$ is an entire function under
the condition ($\ref{5}$).

If we put $\alpha _{1}=...=\alpha _{p}=\beta _{1}=....=\beta _{q}=1$ in ($%
\ref{4}$),then the special case of the generalized Wright function is:
\begin{equation}
{}_{p}\Psi _{q}(z)={}_{p}\Psi _{q}\left[ 
\begin{array}{c}
\left( \alpha _{1},1\right) ,...,\left( \alpha _{p},1\right) ; \\ 
\left( \beta _{1},1\right) ,...,\left( \beta _{q},1\right) ;%
\end{array}%
z\right] =\dfrac{\prod_{j=1}^{q}\Gamma (\beta _{j})}{\prod_{i=1}^{p}\Gamma
(\alpha _{i})}\text{ }_{p}F_{q}\left[ 
\begin{array}{c}
\alpha _{1},...,\alpha _{p}; \\ 
\beta _{1},...,\beta _{q};%
\end{array}%
z\right]  \label{9}
\end{equation}

provided $0<\Re \left( \mu \right) <\Re \left( \lambda \right) .~$Here our
main aim to establish two generalized integral formulas, which are expressed
in terms of generalized Wright functions by inserting generalized
Bessel-Maitland function.

The unified integral formulas involving special functions attract the
attention many authors in eighteenth century (see, \cite{Brychkov},\cite%
{Choi1}). In 1888, Pincherle studied the integrals involving product of
gamma functions along vertical lines (see \cite%
{Pincherle,Pincherle1,Pincherle2}).The extension of such study carried by
Barnes \cite{Barnes}, Mellin \cite{Mellin} and Cahen \cite{Cahen} and
applied some of these integrals in the study of Riemann zeta function and
other Drichlet's series. The unified integral representation of the Fox $H$%
-functions given by Garg \cite{Garg} and the unified integral representation
of the hypergeometric $_{2}F_{1}$ functions by Ali \cite{Ali} . Recently,
Choi and Agarwal \cite{Choi2} obtained two unified integral representations
of Bessel functions $J_{v}\left( z\right) .~$For more \ details are found in
various recent works (see \cite{Choi-Mathur},\cite{Nisar1},\cite{Nisar2},%
\cite{Nisar3}, \cite{Manaria}, \cite{Maneria2}). For the present
investigation, we need the following result of Oberhettinger \cite{Ober}

\begin{equation}
\int_{0}^{\infty }x^{\mu -1}\left( x+a+\sqrt{x^{2}+2ax}\right) ^{-\lambda
}dx=2\lambda a^{-\lambda }\left( \frac{a}{2}\right) ^{\mu }\frac{\Gamma
\left( 2\mu \right) \Gamma \left( \lambda -\mu \right) }{\Gamma \left(
1+\lambda +\mu \right) }  \label{10}
\end{equation}

\section{Main results}

\begin{theorem}
\label{Th1} For $\delta ,\lambda ,v,c\in \mathbb{C}$,then the following
integral formula hold true:%
\begin{eqnarray*}
&&\int_{0}^{\infty }x^{\mu -1}\left( x+a+\sqrt{x^{2}+2ax}\right) ^{-\lambda }%
{J}_{v,q}^{\delta ,\gamma }\left( \frac{y}{x+a+\sqrt{x^{2}+2ax}}\right) dx \\
&=&\frac{\Gamma \left( 2\mu \right) }{\Gamma \left( \gamma \right) }a^{\mu
-\lambda }2^{1-\mu } \\
&&\times _{3}\Psi _{3}\left[ 
\begin{array}{c}
\left( \gamma ,q\right) ,\left( \lambda +1,1\right) ,\left( \lambda -\mu
,1\right) ; \\ 
\left( \nu +1,\delta \right) ,\left( 1+\lambda +\upsilon +\mu ,1\right)
,\left( \lambda ,1\right) ;%
\end{array}%
-\frac{y}{a}\right] 
\end{eqnarray*}
\end{theorem}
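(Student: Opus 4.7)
The plan is to expand the generalized Bessel--Maitland function inside the integral using its series definition (\ref{2}), swap the order of summation and integration, apply Oberhettinger's formula (\ref{10}) term by term with parameter $\lambda$ replaced by $\lambda+n$, and finally repackage the resulting series of Gamma functions into the Fox--Wright form (\ref{4}). More concretely, after substituting the series I would obtain
\begin{equation*}
\sum_{n=0}^{\infty}\frac{(\gamma)_{qn}(-y)^{n}}{n!\,\Gamma(\delta n+v+1)}\int_{0}^{\infty} x^{\mu-1}\bigl(x+a+\sqrt{x^{2}+2ax}\bigr)^{-(\lambda+n)}\,dx,
\end{equation*}
and then apply (\ref{10}) with $\lambda\leadsto\lambda+n$ to evaluate the inner integral as
\begin{equation*}
2(\lambda+n)\,a^{-(\lambda+n)}\!\left(\tfrac{a}{2}\right)^{\!\mu}\frac{\Gamma(2\mu)\,\Gamma(\lambda+n-\mu)}{\Gamma(1+\lambda+n+\mu)}.
\end{equation*}

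Next I would collect the $n$-independent constants $2^{1-\mu}a^{\mu-\lambda}\Gamma(2\mu)$ outside the sum, combine $a^{-n}$ with $(-y)^{n}$ to form $(-y/a)^{n}$, and rewrite the Pochhammer symbol via $(\gamma)_{qn}=\Gamma(\gamma+qn)/\Gamma(\gamma)$ so that the leading factor $1/\Gamma(\gamma)$ appears. The crucial algebraic trick is to encode the factor $(\lambda+n)$ as the ratio
\begin{equation*}
\lambda+n=\frac{\Gamma(\lambda+n+1)}{\Gamma(\lambda+n)},
\end{equation*}
which introduces the numerator parameter $(\lambda+1,1)$ together with the denominator parameter $(\lambda,1)$ in the Fox--Wright notation. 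Together with $\Gamma(\gamma+qn)$, $\Gamma(\lambda-\mu+n)$ in the numerator and $\Gamma(v+1+\delta n)$, $\Gamma(1+\lambda+v+\mu+n)$ in the denominator, this produces exactly the three pairs of numerator parameters and three pairs of denominator parameters appearing in the claimed ${}_{3}\Psi_{3}$.

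The only technical point that needs justification is the interchange of summation and integration, which I would handle by appealing to absolute/uniform convergence: for fixed $a>0$ and $|y|$ in a suitable range, the Bessel--Maitland series converges uniformly on compact subsets of $(0,\infty)$ since it is entire in its argument, and the integrand has integrable majorants on $(0,\infty)$ provided the parameters satisfy the convergence conditions $\Re(\mu)>0$ and $\Re(\lambda-\mu)>0$ (inherited from Oberhettinger's formula). Once this is in place, matching the resulting Gamma-function series against (\ref{4}) is purely notational.

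The main obstacle, and really the only nontrivial step, is the bookkeeping of Gamma-function factors: in particular, recognizing that the extra factor $\lambda+n$ coming from Oberhettinger's formula must be split as $\Gamma(\lambda+n+1)/\Gamma(\lambda+n)$ to produce the ``extra'' pair $(\lambda+1,1);(\lambda,1)$ of Fox--Wright parameters. Everything else is a direct substitution and verification of convergence conditions on $\mu,\lambda,v,\delta,\gamma$.
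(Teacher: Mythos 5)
Your proposal is correct and follows essentially the same route as the paper's own proof: series expansion via (\ref{2}), term-by-term application of Oberhettinger's formula (\ref{10}) with $\lambda\mapsto\lambda+n$, rewriting $(\gamma)_{qn}$ via (\ref{3}), and assembling the Gamma-quotient series into the ${}_{3}\Psi_{3}$ form. In fact you make explicit two points the paper leaves implicit or garbled --- the splitting $\lambda+n=\Gamma(\lambda+n+1)/\Gamma(\lambda+n)$ that generates the parameter pair $(\lambda+1,1);(\lambda,1)$, and the convergence conditions $\Re(\mu)>0$, $\Re(\lambda-\mu)>0$ justifying the interchange --- so your write-up is, if anything, slightly more careful than the original.
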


\begin{proof}
By applying $\left( \ref{2}\right) $ to the LHS of theorem $\ref{Th1}~$and
interchanging the order of integration and summation, which is verified by
uniform convergence of the involved series under the given conditions , we
obtain%
\begin{eqnarray*}
&&\int_{0}^{\infty }x^{\mu -1}\left( x+a+\sqrt{x^{2}+2ax}\right) ^{-\lambda }%
{J}_{v,q}^{\delta ,\gamma }\left( \frac{y}{x+a+\sqrt{x^{2}+2ax}}\right) dx \\
&=&\int_{0}^{\infty }x^{\mu -1}\left( x+a+\sqrt{x^{2}+2ax}\right) ^{-\lambda
} \\
&&\times \sum\limits_{n=0}^{\infty }{\frac{{{{\left( \gamma \right) }_{qn}}{{%
\left( {-y}\right) }^{n}}}}{{n!\Gamma \left( {\delta n+v+1}\right) }}}\left(
x+a+\sqrt{x^{2}+2ax}\right) ^{-n}dx \\
&=&\sum\limits_{n=0}^{\infty }{\frac{{{{\left( \gamma \right) }_{qn}}{{%
\left( {-y}\right) }^{n}}}}{{n!\Gamma \left( {\delta n+v+1}\right) }}}%
\int_{0}^{\infty }x^{\mu -1}\left( x+a+\sqrt{x^{2}+2ax}\right) ^{-\left(
\lambda +n\right) }dx
\end{eqnarray*}

Since $\mathbb{R}\left( \lambda \right) >\mathbb{R}\left( \mu \right) >0,$ $%
k\in \mathbb{N}_{0}:=\mathbb{N}\cup \left\{ 0\right\} .$

Applying $\left( \ref{10}\right) $ to the integrand of theorem $\ref{Th1}$\
and we obtain the following expression:%
\begin{eqnarray*}
&=&\sum\limits_{n=0}^{\infty }{\frac{{{{\left( \gamma \right) }_{qn}}{{%
\left( {-y}\right) }^{n}}}}{{n!\Gamma \left( {\delta n+v+1}\right) }}}%
2\left( \lambda +n\right) a^{-\left( \lambda +n\right) }\left( \frac{a}{2}%
\right) ^{\mu } \\
&&\times \frac{\Gamma \left( 2\mu \right) \Gamma \left( \lambda +n-\mu
\right) }{\Gamma \left( 1+\lambda +\mu +n\right) }
\end{eqnarray*}

By making the use of the equation $\left( \ref{3}\right) $, we have%
\begin{eqnarray*}
&=&2^{1-\mu }a^{\mu -\lambda }\frac{\Gamma \left( 2\mu \right) }{\Gamma
\left( \gamma \right) } \\
&&\times \sum_{k-0}^{\infty }\frac{\Gamma \left( \gamma +qn\right) \Gamma
\left( \lambda +n-\mu \right) \Gamma \left( \lambda +1+n\right) }{{n!\Gamma
\left( {\delta n+v+1}\right) }\Gamma \left( \lambda +n\right) \left(
1+\lambda +\mu +n\right) } \\
&&\times \left( -\frac{y}{a}\right) ^{n}
\end{eqnarray*}

In view of $\left( \ref{9}\right) $, we obtain the desired result.
\end{proof}

\begin{corollary}
\label{Cor1} If we set $q=0$ and $\gamma =1$, then we obtain integral
representation of ordinary Bessel-Maitland function $\left( \ref{1}\right) $
as:%
\begin{eqnarray*}
&&\int_{0}^{\infty }x^{\mu -1}\left( x+a+\sqrt{x^{2}+2ax}\right) ^{-\lambda }%
{J}_{v}^{\delta }\left( \frac{y}{x+a+\sqrt{x^{2}+2ax}}\right) dx \\
&=&\Gamma \left( 2\mu \right) a^{\mu -\lambda }2^{1-\mu } \\
&&\times _{2}\Psi _{3}\left[ 
\begin{array}{c}
\left( \lambda +1,1\right) ,\left( \lambda -\mu ,1\right) ; \\ 
\left( \nu +1,\delta \right) ,\left( 1+\lambda +\upsilon +\mu ,1\right)
,\left( \lambda ,1\right) ;%
\end{array}%
-\frac{y}{a}\right] 
\end{eqnarray*}
\end{corollary}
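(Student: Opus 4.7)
The plan is to derive Corollary~\ref{Cor1} as a direct specialization of Theorem~\ref{Th1} obtained by setting $q=0$ and $\gamma=1$, so no new integration or resummation is needed; essentially one just has to track how the two sides collapse under these particular parameter choices.

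First I would verify what happens to the integrand on the LHS. Setting $q=0$ in the defining series $\left(\ref{2}\right)$ makes the Pochhammer-like factor $(\gamma)_{qn}$ equal to $(\gamma)_{0}=1$ for every summation index $n$, independently of the value of $\gamma$. Consequently
$$J_{v,0}^{\delta,\gamma}(z) \;=\; \sum_{n=0}^{\infty}\frac{(-z)^n}{n!\,\Gamma(\delta n+v+1)} \;=\; J_{v}^{\delta}(z),$$
which is the ordinary Bessel--Maitland function defined in $\left(\ref{1}\right)$. Thus the LHS of Theorem~\ref{Th1} passes, under $q=0$, exactly into the LHS of Corollary~\ref{Cor1}, and the further choice $\gamma=1$ is irrelevant on this side.

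Next I would simplify the RHS of Theorem~\ref{Th1} under the same specialization. The explicit factor $1/\Gamma(\gamma)$ becomes $1/\Gamma(1)=1$, and the numerator parameter pair $(\gamma,q)=(1,0)$ appearing in the ${}_{3}\Psi_{3}$ contributes $\Gamma(1+0\cdot k)=1$ to the general term of the Wright series for every index $k$. Hence that pair drops out term by term, the series reindexes as a ${}_{2}\Psi_{3}$ with the remaining numerator pairs $(\lambda+1,1)$ and $(\lambda-\mu,1)$ and the unchanged denominator triple $(v+1,\delta),(1+\lambda+v+\mu,1),(\lambda,1)$, while the prefactor collapses to $\Gamma(2\mu)\,a^{\mu-\lambda}\,2^{1-\mu}$. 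Assembling these two reductions produces precisely the formula stated in the corollary.

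The argument poses no serious obstacle; the only point requiring a small amount of care is the bookkeeping on the Wright-function side, namely confirming that the pair $(\gamma,q)=(1,0)$ cancels the $1/\Gamma(\gamma)$ normalization and removes one numerator $\Gamma$-factor from every term of the series, leaving a well-formed ${}_{2}\Psi_{3}$ rather than a degenerate expression. Once this matching is made explicit, the corollary follows immediately.
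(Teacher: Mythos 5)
Your proposal is correct and matches the paper's intent exactly: the paper gives no separate proof of Corollary~\ref{Cor1}, treating it as the immediate specialization $q=0$, $\gamma=1$ of Theorem~\ref{Th1}, which is precisely the reduction you carry out. Your bookkeeping is sound on both sides --- $(\gamma)_{0}=1$ collapses ${J}_{v,q}^{\delta,\gamma}$ to ${J}_{v}^{\delta}$, and the pair $(\gamma,q)=(1,0)$ contributes $\Gamma(1+0\cdot k)=1$ to each term while $1/\Gamma(1)=1$, so the ${}_{3}\Psi_{3}$ degenerates cleanly to the stated ${}_{2}\Psi_{3}$.
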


\begin{corollary}
\label{Cor2} If we set $q=0$ and $\delta =$ $\gamma =1$, then we obtain
integral representation of ordinary Bessel-Maitland function $\left( \ref{1}%
\right) $ as:%
\begin{eqnarray*}
&&\int_{0}^{\infty }x^{\mu -1}\left( x+a+\sqrt{x^{2}+2ax}\right) ^{-\lambda }%
{J}_{v}\left( \frac{y}{x+a+\sqrt{x^{2}+2ax}}\right) dx \\
&=&\Gamma \left( 2\mu \right) a^{\mu -\lambda }2^{1-\mu } \\
&&\times _{2}\Psi _{3}\left[ 
\begin{array}{c}
\left( \lambda +1,1\right) ,\left( \lambda -\mu ,1\right) ; \\ 
\left( \nu +1,1\right) ,\left( 1+\lambda +\upsilon +\mu ,1\right) ,\left(
\lambda ,1\right) ;%
\end{array}%
-\frac{y}{a}\right] 
\end{eqnarray*}
\end{corollary}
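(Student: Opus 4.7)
The plan is to obtain Corollary~\ref{Cor2} as a direct specialization of the preceding results, avoiding the need to reprove any integral identity from scratch. The shortest route is to start from Corollary~\ref{Cor1}, which already handles the parameter reductions $q=0$ and $\gamma=1$, and simply impose the additional constraint $\delta=1$. Under this substitution the only pair in the generalized Wright function $_{2}\Psi _{3}$ that depends on $\delta$, namely $(\nu+1,\delta)$, becomes $(\nu+1,1)$, while every other pair remains unchanged. The Bessel--Maitland function on the left-hand side simultaneously collapses: with $\delta=1$ the series in $(\ref{1})$ reduces, in the notation adopted by the paper, to the ordinary Bessel function $J_{v}$. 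Hence the right-hand side of Corollary~\ref{Cor1} transforms into exactly the expression claimed in Corollary~\ref{Cor2}.

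As an equivalent check, I would rerun the derivation of Theorem~\ref{Th1} with the three specializations $q=0$, $\gamma=1$, $\delta=1$ applied from the outset. After expanding ${J}_{v,0}^{1,1}$ via $(\ref{2})$, the factor $(\gamma)_{qn}=(1)_{0}=1$ drops out, $\Gamma(\delta n+v+1)$ becomes $\Gamma(n+v+1)$, and the termwise application of Oberhettinger's formula $(\ref{10})$ proceeds verbatim. Collecting the gamma factors as in the proof of Theorem~\ref{Th1} yields
\[
2^{1-\mu}a^{\mu-\lambda}\,\Gamma(2\mu)\sum_{n=0}^{\infty}\frac{\Gamma(\lambda+n-\mu)\,\Gamma(\lambda+1+n)}{n!\,\Gamma(n+v+1)\,\Gamma(\lambda+n)\,\Gamma(1+\lambda+\mu+n)}\left(-\frac{y}{a}\right)^{n},
\]
which, upon invoking the definition $(\ref{4})$ with all $\alpha_{i}=\beta_{j}=1$ except $\beta_{1}=1$ (replacing the former $\delta$), is precisely the $_{2}\Psi _{3}$ appearing in the statement.

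There is essentially no analytic obstacle here: uniform convergence of the Bessel--Maitland series and the validity of $(\ref{10})$ under $\Re(\lambda)>\Re(\mu)>0$ have already been used in Theorem~\ref{Th1}, and both survive the further restriction of parameters. The only point that warrants attention is the bookkeeping inside the Wright symbol, in particular confirming that the exponent on $(\nu+1,\cdot)$ changes from $\delta$ to $1$ and that no numerator pair needs to be added or removed when passing from Corollary~\ref{Cor1} to Corollary~\ref{Cor2}. Once this identification is made, the corollary follows immediately.
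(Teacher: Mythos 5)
Your proposal is correct and takes essentially the same route as the paper, which likewise obtains Corollary~\ref{Cor2} as a direct specialization (the paper gives no separate proof): setting $\delta=1$ in Corollary~\ref{Cor1}, equivalently $q=0$, $\gamma=1$, $\delta=1$ in Theorem~\ref{Th1}, turns the pair $(\nu+1,\delta)$ into $(\nu+1,1)$ and leaves everything else unchanged, exactly as you argue. The only remark worth adding is that the $\upsilon$ in the paper's denominator pair $\left(1+\lambda+\upsilon+\mu,1\right)$ is a typo for $1+\lambda+\mu$, as both your intermediate sum and the gamma factor $\Gamma\left(1+\lambda+\mu+n\right)$ in the paper's own proof of Theorem~\ref{Th1} confirm.
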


\begin{theorem}
\label{Th2} For $\delta ,\lambda ,v,c\in \mathbb{C}$,then the following
integral formula hold true:%
\begin{eqnarray*}
&&\int_{0}^{\infty }x^{\mu -1}\left( x+a+\sqrt{x^{2}+2ax}\right) ^{-\lambda }%
{J}_{v,q}^{\delta ,\gamma }\left( \frac{xy}{x+a+\sqrt{x^{2}+2ax}}\right) dx
\\
&=&2^{1-\mu }a^{\mu -\lambda }\frac{\Gamma \left( \lambda -\mu \right) }{%
\Gamma \left( \gamma \right) } \\
&&\times _{3}\Psi _{3}\left[ 
\begin{array}{c}
\left( \gamma ,q\right) ,\left( \lambda +1,1\right) ,\left( 2\mu ,2\right) ;
\\ 
\left( \nu +1,\delta \right) ,\left( 1+\lambda +\upsilon +\mu ,2\right)
,\left( \lambda ,1\right) ;%
\end{array}%
-\frac{y}{a}\right] 
\end{eqnarray*}
\end{theorem}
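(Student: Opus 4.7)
The approach parallels the proof of Theorem~\ref{Th1} step-by-step; the only new ingredient is the extra factor of $x$ that appears in the argument $xy/(x+a+\sqrt{x^{2}+2ax})$ of the Bessel--Maitland function. This single change is what produces the ``doubled'' parameters $(2\mu,2)$ and $(1+\lambda+\mu,2)$ in the final ${}_{3}\Psi_{3}$, where Theorem~\ref{Th1} had the linearly shifted $(\lambda-\mu,1)$ and $(1+\lambda+\mu,1)$.

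First I would substitute the series (\ref{2}) for ${J}_{v,q}^{\delta,\gamma}$ into the left-hand side and exchange summation and integration, justified by the same uniform-convergence argument used in Theorem~\ref{Th1}. Because of the factor $x^{n}$ coming from $(xy)^{n}$, the $n$-th integral to be evaluated becomes
\[
\int_{0}^{\infty} x^{\mu+n-1}\bigl(x+a+\sqrt{x^{2}+2ax}\bigr)^{-(\lambda+n)}\,dx,
\]
whereas in Theorem~\ref{Th1} the exponent was only $\mu-1$. I then apply Oberhettinger's formula (\ref{10}) with $\mu\mapsto\mu+n$ and $\lambda\mapsto\lambda+n$.

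Two features of this substitution are worth highlighting, as they drive the final parameter list. First, the gamma $\Gamma((\lambda+n)-(\mu+n))=\Gamma(\lambda-\mu)$ is now independent of $n$ and pulls outside the summation---this is precisely why $\Gamma(\lambda-\mu)/\Gamma(\gamma)$ appears as a prefactor in the statement, playing the role that $\Gamma(2\mu)/\Gamma(\gamma)$ played in Theorem~\ref{Th1}. Second, the other two gammas acquire a factor of $2$ on the summation index, namely $\Gamma(2\mu+2n)$ in the numerator and $\Gamma(1+\lambda+\mu+2n)$ in the denominator, and these are exactly what the pairs $(2\mu,2)$ and $(1+\lambda+\mu,2)$ encode in the Wright function (\ref{4}). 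Thereafter I would simplify $a^{-(\lambda+n)}(a/2)^{\mu+n}=a^{\mu-\lambda}\,2^{-(\mu+n)}$, rewrite $(\lambda+n)=\Gamma(\lambda+1+n)/\Gamma(\lambda+n)$ and $(\gamma)_{q,n}=\Gamma(\gamma+qn)/\Gamma(\gamma)$ exactly as in the Theorem~\ref{Th1} calculation, pull out the constant $2^{1-\mu}a^{\mu-\lambda}\Gamma(\lambda-\mu)/\Gamma(\gamma)$, and read off the remaining series as ${}_{3}\Psi_{3}$ via (\ref{4}).

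The main obstacle---really the only non-routine part---is the bookkeeping at the Oberhettinger step: one must correctly track the $2n$-shifts, not conflate $\Gamma(2\mu+2n)$ with a Pochhammer-type $(2\mu)_{n}$ factor, and remember to lift the $n$-free $\Gamma(\lambda-\mu)$ out of the summation. Everything else is the same algebraic simplification performed in the proof of Theorem~\ref{Th1}, and once the Wright-function parameters are matched term-by-term the identification is immediate.
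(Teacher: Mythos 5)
Your proposal is correct and follows the paper's proof essentially verbatim: series substitution via (\ref{2}), term-by-term integration, Oberhettinger's formula (\ref{10}) applied with $\mu \mapsto \mu+n$ and $\lambda \mapsto \lambda+n$ (so that $\Gamma(\lambda-\mu)$ becomes $n$-free and the shifts $\Gamma(2\mu+2n)$, $\Gamma(1+\lambda+\mu+2n)$ produce the weight-$2$ parameters), followed by the same algebraic reassembly into a ${}_{3}\Psi_{3}$ via (\ref{9}). One remark: your simplification $a^{-(\lambda+n)}(a/2)^{\mu+n}=a^{\mu-\lambda}2^{-(\mu+n)}$ correctly leaves a residual factor $2^{-n}$, so the Wright series is evaluated at $-y/2$ --- which is exactly what the paper's own proof obtains in its final display with $\left(-\frac{y}{2}\right)^{n}$ --- meaning the argument $-\frac{y}{a}$ printed in the theorem statement is a typo in the paper, and your derivation, like the paper's proof, actually establishes the formula with argument $-\frac{y}{2}$.
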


\begin{proof}
By applying $\left( \ref{2}\right) $ to the LHS of theorem $\ref{Th2}~$and
interchanging the order of integration and summation, which is verified by
uniform convergence of the involved series under the given conditions , we
obtain%
\begin{eqnarray*}
&&\int_{0}^{\infty }x^{\mu -1}\left( x+a+\sqrt{x^{2}+2ax}\right) ^{-\lambda }%
{J}_{v,q}^{\delta ,\gamma }\left( \frac{xy}{x+a+\sqrt{x^{2}+2ax}}\right) dx
\\
&=&\int_{0}^{\infty }x^{\mu -1}\left( x+a+\sqrt{x^{2}+2ax}\right) ^{-\lambda
} \\
&&\times \sum\limits_{n=0}^{\infty }{\frac{{{{\left( \gamma \right) }_{qn}}{{%
\left( {-xy}\right) }^{n}}}}{{n!\Gamma \left( {\delta n+v+1}\right) }}}%
\left( x+a+\sqrt{x^{2}+2ax}\right) ^{-n}dx \\
&=&\sum\limits_{n=0}^{\infty }{\frac{{{{\left( \gamma \right) }_{qn}}{{%
\left( {-y}\right) }^{n}}}}{{n!\Gamma \left( {\delta n+v+1}\right) }}}%
\int_{0}^{\infty }x^{\mu +n-1}\left( x+a+\sqrt{x^{2}+2ax}\right) ^{-\left(
\lambda +n\right) }dx
\end{eqnarray*}

Since $\mathbb{R}\left( \lambda \right) >\mathbb{R}\left( \mu \right) >0,$ $%
k\in \mathbb{N}_{0}:=\mathbb{N}\cup \left\{ 0\right\} .$

Applying $\left( \ref{10}\right) $ to the integrand of theorem \ref{Th2}\
and we obtain the following expression:%
\begin{eqnarray*}
&=&\sum\limits_{n=0}^{\infty }{\frac{{{{\left( \gamma \right) }_{qn}}{{%
\left( {-y}\right) }^{n}}}}{{n!\Gamma \left( {\delta n+v+1}\right) }}}%
2\left( \lambda +n\right) a^{-\left( \lambda +n\right) }\left( \frac{a}{2}%
\right) ^{\mu +n} \\
&&\times \frac{\Gamma \left( 2\left( \mu +n\right) \right) \Gamma \left(
\lambda +n-\mu -n\right) }{\Gamma \left( 1+\lambda +\mu +n+n\right) }
\end{eqnarray*}

By making the use of the equation $\left( \ref{3}\right) $, we have%
\begin{eqnarray*}
&=&2^{1-\mu }a^{\mu -\lambda }\frac{\Gamma \left( \lambda -\mu \right) }{%
\Gamma \left( \gamma \right) } \\
&&\times \sum_{k-0}^{\infty }\frac{\Gamma \left( \gamma +qn\right) \Gamma
\left( \lambda +1+n\right) \Gamma \left( 2\mu +2n\right) }{{n!\Gamma \left( {%
\delta n+v+1}\right) }\Gamma \left( \lambda +n\right) \Gamma \left(
1+\lambda +\mu +2n\right) } \\
&&\times \left( -\frac{y}{2}\right) ^{n}
\end{eqnarray*}

In view of $\left( \ref{9}\right) $, we obtain the desired result.
\end{proof}

\begin{corollary}
If we set $q=0$ and $\gamma =1$, then we obtain integral representation of
ordinary Bessel-Maitland function $\left( \ref{1}\right) $ as:%
\begin{eqnarray*}
&&\int_{0}^{\infty }x^{\mu -1}\left( x+a+\sqrt{x^{2}+2ax}\right) ^{-\lambda }%
{J}_{v}^{\delta }\left( \frac{xy}{x+a+\sqrt{x^{2}+2ax}}\right) dx \\
&=&2^{1-\mu }a^{\mu -\lambda }\Gamma \left( \lambda -\mu \right)  \\
&&\times _{2}\Psi _{3}\left[ 
\begin{array}{c}
\left( \lambda +1,1\right) ,\left( 2\mu ,2\right) ; \\ 
\left( \nu +1,\delta \right) ,\left( 1+\lambda +\upsilon +\mu ,2\right)
,\left( \lambda ,1\right) ;%
\end{array}%
-\frac{y}{a}\right] 
\end{eqnarray*}
\end{corollary}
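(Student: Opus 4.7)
The plan is to specialize Theorem~\ref{Th2} by setting $q=0$ and $\gamma=1$ and to track the resulting simplifications on both sides of the identity; no new integration needs to be performed.

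First I would rewrite the left-hand side. In the series definition (\ref{2}) of the generalized Bessel--Maitland function, the factor $(\gamma)_{qn}$ equals $\Gamma(\gamma+qn)/\Gamma(\gamma)$, which for $q=0$ and $\gamma=1$ collapses to $\Gamma(1)/\Gamma(1)=1$ for every $n\ge 0$. Hence the series (\ref{2}) reduces termwise to the series (\ref{1}) that defines the classical Bessel--Maitland function ${J}^{\delta}_{v}$, so the integrand on the LHS of Theorem~\ref{Th2} turns into the integrand on the LHS of the corollary.

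Next I would simplify the right-hand side. The overall prefactor $1/\Gamma(\gamma)=1/\Gamma(1)=1$ disappears. Inside the ${}_3\Psi_3$, the numerator parameter pair $(\gamma,q)=(1,0)$ contributes $\Gamma(1+0\cdot k)=1$ to the coefficient of every power $z^k$ in the defining series (\ref{4}), so this pair may be deleted from the list of numerator parameters without altering any term of the sum. This reduces the ${}_3\Psi_3$ to a ${}_2\Psi_3$ whose numerator and denominator parameter lists coincide with those written in the corollary, with the same argument $-y/a$.

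The only delicate point will be bookkeeping around the normalization displayed in (\ref{4}): one must check that removing a numerator pair of the form $(1,0)$ cancels cleanly both inside the summation and against the $\prod_{i}\Gamma(\alpha_i)$ appearing in the prefactor, so that no spurious constant is left behind. Once this simplification is verified, the claim of the corollary is an immediate consequence of Theorem~\ref{Th2}, and the proof is complete.
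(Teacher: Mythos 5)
Your proposal is correct and follows essentially the same route as the paper, which presents this corollary as an immediate specialization of Theorem~\ref{Th2}: setting $q=0$ and $\gamma=1$ makes $(\gamma)_{qn}=\Gamma(\gamma+qn)/\Gamma(\gamma)=1$ for all $n$, reducing the series $(\ref{2})$ to $(\ref{1})$ on the left-hand side and deleting the parameter pair $(\gamma,q)$ from the ${}_{3}\Psi_{3}$ on the right. The ``delicate point'' you flag resolves trivially, since with $\gamma=1$, $q=0$ both the external factor $1/\Gamma(\gamma)$ and the termwise factor $\Gamma(\gamma+qk)$ equal $1$, so the collapse to the stated ${}_{2}\Psi_{3}$ leaves no spurious constant.
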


\begin{corollary}
If we set $q=0$ and $\delta =\gamma =1$, then we obtain integral
representation of Bessel function ${J}_{v}\left( z\right) $ as:%
\begin{eqnarray*}
&&\int_{0}^{\infty }x^{\mu -1}\left( x+a+\sqrt{x^{2}+2ax}\right) ^{-\lambda }%
{J}_{v}\left( \frac{xy}{x+a+\sqrt{x^{2}+2ax}}\right) dx \\
&=&2^{1-\mu }a^{\mu -\lambda }\Gamma \left( \lambda -\mu \right)  \\
&&\times _{2}\Psi _{3}\left[ 
\begin{array}{c}
\left( \lambda +1,1\right) ,\left( 2\mu ,2\right) ; \\ 
\left( \nu +1,1\right) ,\left( 1+\lambda +\upsilon +\mu ,2\right) ,\left(
\lambda ,1\right) ;%
\end{array}%
-\frac{y}{a}\right] 
\end{eqnarray*}
\end{corollary}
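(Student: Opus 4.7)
The plan is to obtain this corollary as a direct specialization of Theorem \ref{Th2}, taking $q=0$ and $\delta=\gamma=1$ in the master formula rather than repeating the whole derivation from Oberhettinger's identity \eqref{10}.

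First I would verify that the left-hand side degenerates correctly. In the series \eqref{2}, setting $q=0$ collapses the Pochhammer factor via $(\gamma)_{qn}=(\gamma)_0=1$, independent of $\gamma$. With $\delta=1$ the denominator $\Gamma(\delta n+v+1)$ becomes $\Gamma(n+v+1)$, so the generalized Bessel--Maitland function reduces to $\sum_{n\ge 0} (-z)^n/(n!\,\Gamma(n+v+1))$, which is precisely the $J_v$ appearing in the statement. Hence the integrand on the left is exactly the specialization claimed.

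Next I would specialize the right-hand side. In Theorem \ref{Th2} the overall prefactor is $\Gamma(\lambda-\mu)/\Gamma(\gamma)$; putting $\gamma=1$ makes $\Gamma(\gamma)=1$, leaving only $\Gamma(\lambda-\mu)$. In the ${}_{3}\Psi_{3}$ series, the top parameter pair $(\gamma,q)=(1,0)$ contributes $\Gamma(1+0\cdot k)=1$ for every summation index $k$, so this factor drops out of the Wright series and the function collapses to ${}_{2}\Psi_{3}$ with top parameters $(\lambda+1,1)$ and $(2\mu,2)$. Similarly, the bottom parameter $(\nu+1,\delta)$ becomes $(\nu+1,1)$ under $\delta=1$, while $(1+\lambda+\nu+\mu,2)$ and $(\lambda,1)$ are unchanged. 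Matching against the corollary's right-hand side then closes the argument.

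The only delicate point — and this is the main thing I would check carefully — is that the reduction from ${}_{3}\Psi_{3}$ to ${}_{2}\Psi_{3}$ is consistent with the definition \eqref{4}, because dropping the $(1,0)$ parameter must be compatible with the corresponding normalizing gamma factor already pulled out as $1/\Gamma(\gamma)$. Since both contribute $\Gamma(1)=1$, no genuine obstruction arises; all convergence issues and the interchange of sum and integral are inherited directly from the proof of Theorem \ref{Th2}, so no new estimates are required.
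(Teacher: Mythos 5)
Your proposal is correct and matches the paper's (implicit) argument: the corollary is indeed just Theorem \ref{Th2} specialized at $q=0$, $\delta=\gamma=1$, with $(\gamma)_{0n}=1$ collapsing the series \eqref{2} to the stated $J_{v}$ and the pair $(1,0)$ contributing $\Gamma(1+0\cdot k)=1$ so that the ${}_{3}\Psi_{3}$ drops to the ${}_{2}\Psi_{3}$ shown. Your care about the $(1,0)$ parameter being compatible with the $1/\Gamma(\gamma)$ prefactor is exactly the right consistency check, and nothing further is needed.
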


\textbf{Conclusion: }

Certain unified integral representation of Bessel-Maitland function and its
special cases are derived in this study. In this sequel, one can obtain
integral representation of more generalized special functions, which has
much application in physics and engineering Science.


\begin{thebibliography}{99}
\bibitem{Marichev} O.I. Marichev, Handbook of Integral Transform and higher
transdental functions,Theory and Algorithm tables, Ellis Horwood,Chichester,
New York, 1983

\bibitem{Pincherle} S. Pincherle, Sulle funzioni ipergeometriche
generalizzate Nota I, Atti della Reale Accademia dei Lincei. Rendiconti
della Classe di Scienze Fisiche, Matematiche e Naturali, Serie 4, Vol. 4
(1888), 694--700

\bibitem{Pincherle1} Pincherle, S: Sulle funzioni ipergeometriche
generalizzate. Nota I-II. In: Opere Scelte-I, Unione Matematica Italiana,
Ed. Cremonese, Rome (1954), 223-239.

\bibitem{Pincherle2} Mainardi, F, Pagnini, G, Pincherle, S: The pioneer of
the Mellin-Barnes integrals. J. Comput. Appl. Math. 153, (2003),331-342.

\bibitem{Barnes} Barnes, EW: A new development of the theory of the
hypergeometric functions. Proc. Lond. Math. Soc. 6, 141-177, (1908)

\bibitem{Mellin} Mellin, HM: Abri\ss\ einer einheitlichen theorie der Gamma
und der hypergeometrischen funktionen. Math. Ann. 68, 305-337 (1910)

\bibitem{Cahen} Cahen, E: Sur la fonction $\xi{\sigma}$ de Riemann et sur
des fonctions analogues. Ann. Sci. \'{E}c. Norm. Super. 11, 75-164, (1894)

\bibitem{Brychkov} Brychkov, YA: Handbook of special functions: derivatives,
integrals, series and other formulas. CRC Press, Boca Raton, (2008)

\bibitem{Choi1} Choi, J, Hasanov, A, Srivastava, HM, Turaev, M: Integral
representations for Srivastava's triple hypergeometric functions. Taiwan. J.
Math. 15, 2751-2762 (2011)

\bibitem{Garg} Garg, M, Mittal, S: On a new unified integral. Proc. Indian
Acad. Sci. Math. Sci. 114(2), 99-101 (2003)

\bibitem{Ali} Ali, S: On some new unified integrals. Adv. Comput. Math.
Appl. 1(3), 151-153 (2012)

\bibitem{Choi2} Junesang Choi, Praveen Agarwal, Certain unified integrals
associated with Bessel functions, Boundary Value Problems 2013, 2013:95

\bibitem{Choi-Mathur} J.Choi, P.Agarwal,S.Mathur and S.D.Purohit, Certain
new integral formulas involving the generalized Bessel functions,Bull.
Korean Math. Soc., 4(51),2014, 995-1003.

\bibitem{Ober} Oberhettinger, F: Tables of Mellin Transforms. Springer, New
York (1974)

\bibitem{Watson} Watson, GN: A Treatise on the Theory of Bessel Functions.
Cambridge Mathematical Library Edition. Camdridge University Press,
Camdridge (1995). Reprinted (1996)

\bibitem{Erdélyi-1} A. Erd\'{e}lyi, W. Magnus, F. Oberhettinger\ and\ F.G.
Tricomi, \textit{Higher transcendental functions. Vols. I, II}, McGraw-Hill
Book Company, Inc., New York, 1953. NewYork, Toronto, London, 1953.

\bibitem{Foxc} C. Fox, The asymptotic expansion of generalized
hypergeometric functions, Proc. London. Math. Soc. \textbf{27} (1928),
no.~4, 389-400.

\bibitem{Wright-2} E. M. Wright, The asymptotic expansion of integral
functions defined by Taylor series, Philos. Trans. Roy. Soc. London, Ser. A. 
\textbf{238} (1940), 423--451.

\bibitem{Wright-3} E. M. Wright, The asymptotic expansion of the generalized
hypergeometric function, Proc. London Math. Soc. (2) \textbf{46} (1940),
389-408.

\bibitem{Kilbas-Book} A. A. Kilbas\ and\ M. Saigo, \textit{H-transforms},
Chapman \& Hall/CRC, Boca Raton, FL, 2004.

\bibitem{Kilbas} A. A. Kilbas, M. Saigo\ and\ J. J. Trujillo, On the
generalized Wright function, Fract. Calc. Appl. Anal. \textbf{5} (2002),
no.~4, 437--460.

\bibitem{Kilbas-itsf} A. A. Kilbas\ and\ N. Sebastian, Generalized
fractional integration of Bessel function of the first kind, Integral
Transforms Spec. Funct. \textbf{19} (2008), no.~11-12, 869--883.

\bibitem{Kilbas-frac} A. A. Kilbas\ and\ N. Sebastian, Fractional
integration of the product of Bessel function of the first kind, Fract.
Calc. Appl. Anal. \textbf{13} (2010), no.~2, 159--175.

\bibitem{Rainville} E. D. Rainville, \textit{Special functions}, Macmillan,
New York, 1960.

\bibitem{Msingh} M. Singh, M.A, Khan and A.H. Khan, On some properties of a
generalization of Bessel-Maitland function, International journal of
Mathematics trends and technology, 14(1), 2014

\bibitem{Nisar1} K.S. Nisar, R. K. Parmar, and A.H. Abusufian, Certain New
Unified Integrals Associated With The Generalized K-Bessel Function, Far
East Journal of Mathematical Sciences 2016.

\bibitem{Nisar2} K. S. Nisar and S. R. Mondal, Certain unified integral
formulas involving the generalized modified $k$-Bessel function of the first
kind, arXiv:1601.06487 [math.CA].

\bibitem{Nisar3} K. S. Nisar, P. AgarwaL and S. Jain, Some unified integrals
associated with Bessel-Struve kernel function, arXiv:1602.01496v1 [math.CA]

\bibitem{Manaria} N. Menaria , D. Baleanu and S. D. Purohit, Integral
Formulas Involving Product of General Class of Polynomials and Generalized
Bessel Function, Sohag J. Math. 3, No. 2, 1-5 (2016)

\bibitem{Maneria2} N. Menaria, S. D. Purohit and R. K. Parmar, On a new
class of integrals involving generalized Mittag-leffler function, Surveys in
Mathematics and its Applications, Volume 11 (2016), 1-9.
\end{thebibliography}
\end{document}